
\documentclass[12pt]{article}  

\usepackage{amsmath}
\usepackage{amsfonts}
\usepackage{amssymb}
\usepackage{amsthm, cite}

\newcommand{\spann}{\operatorname{span}\nolimits}
\def\ad{\operatorname{ad}\nolimits}
\def\id{\operatorname{id}}

\def\so{\operatorname{so}}
\def\extr{\operatorname{extr}}
\def\rank{\operatorname{rank}}
\def\interior{\operatorname{int}}
\def\const{\operatorname{const}}

\def\f{\varphi}

\newcommand{\be}[1]{\begin{equation}\label{#1}}
\newcommand{\ee}{\end{equation}}

\newtheorem{theorem}{Theorem}
\newtheorem{lemma}{Lemma}
\newtheorem{corollary}{Corollary}

\newtheorem{example}{Example}

\title{\LARGE \bf
Periodic controls \\in step 2 strictly-convex \\sub-Finsler problems*
}

\author{Yuri L. Sachkov
\thanks{Sections 1--3 of this work are 
 supported by the Academy of Finland (grant 277923)
and by the European Research Council (ERC Starting Grant 713998 GeoMeG).
Sections 4--6 of this work are supported by the Russian Science Foundation 
under grant 17-11-01387 and performed in Ailamazyan Program Systems Institute 
of Russian Academy of Sciences
}
\thanks{Yuri Sachkov is with Program Systems Institute, Pereslavl-Zalessky, Russia and Department of Mathematics and Statistics, University of Jyv\"askyl\"a, Finland
        {\tt\small yusachkov@gmail.com}}%
}

\begin{document}

\maketitle

\begin{abstract}
We consider control-linear left-invariant time-optimal problems on step 2 Carnot groups  with strictly convex set of control parameters (in particular, sub-Finsler problems). 

We describe all linear-in-momenta Casimirs on the dual of the Lie algebra.

In the case of rank 3 Lie groups we describe the symplectic foliation on the dual of the Lie algebra. On this basis we show that extremal controls are either constant or periodic. 

Some related results for other Carnot groups are presented. 

\end{abstract}

\textbf{Keywords:}  Optimal control, sub-Finsler geometry, Lie groups, Pontryagin maximum principle

\textbf{MSC2010:} 49J15, 53C17

\section{Introduction}\label{sec:intro}

We consider linear-in-controls time-optimal left-invariant problems on step 2 Carnot groups, with a strictly convex control set. In particular, this class of problems contains sub-Riemannian \cite{mont,  notes, ABB} and sub-Finsler \cite{ber, ber2, BBLDS, ALDS, ali-charlot} problems. Our aim is to characterize extremal controls.

It is enough to consider the free-nilpotent cases since any step-2 Carnot group is a quotient of a step-2
free-nilpotent Lie group (with the same number of generators) and, moreover, every minimizing curve lifts to a
minimizing curve.
Indeed,
see Theorem 4.2 in \cite{ledonne-speight} 
for the existence of free Carnot groups;
see Corollary 2.11 in \cite{ledonne-rigot} 
for the fact that the quotient is a submetry and therefore geodesics lift to geodesics.

We describe linear Casimirs on the dual of the Lie algebra. As a consequence, in the rank 3 case we characterize the symplectic foliation. Further, we apply Pontryagin maximum principle, and show that in the rank 3 case the extremal controls are either constant or periodic.

\section{Problem statement}
Let $L$ be the step 2 free-nilpotent Lie algebra with $k \geq 2$ generators:
\begin{align}
&L = L^{(1)} + L^{(2)}, \nonumber& \\
&L^{(1)} = \spann \{ X_i ~|~ i = 1, \dots, k \}, \nonumber&  \\
&L^{(2)} = \spann \{ X_{ij} ~|~ 1 \le i < j \le k \}, \nonumber & \\
&[X_i, X_j] = X_{ij}, \quad \ad X_{ij} = 0, \quad 1 \le i < j \le k, &\label{tab1} \\
&\dim L = k(k + 1)/2.& \nonumber 
\end{align}
Let $G$ be the connected simply connected Lie group with the Lie algebra $L$. We will think of $X_i, X_{ij}$ as left-invariant vector fields on $G$. 

A model of vector fields $X_i, X_{ij}$  on $$G \cong \mathbb{R}^{k(k+1)/2} = \{ (x_1, \dots, x_k;~x_{12}, \dots, x_{(k - 1)k}) \}$$ is given by
\begin{align*}
&X_i = \frac{\partial}{\partial x_i} - \sum_{j >i} \frac{x_j}{2}  \frac{\partial}{\partial x_{ij}} + 
\sum_{j <i} \frac{x_j}{2}  \frac{\partial}{\partial x_{ji}}, \quad i = 1, \dots, k,\\
&X_{ij} = \frac{\partial}{\partial x_{ij}}, \quad 1 \le i < j \le k,
\end{align*}
here we follow 
Section 2.2 in \cite{ledonne-speight}.

Let $U \subset \mathbb{R}^k$ be a compact convex set containing the origin in its interior.
We consider the following time-optimal problem:
\begin{align}
&\dot{q} = \sum^k_{i=1} u_i X_i, \quad q \in G, \quad u = (u_1, \dots, u_k) \in U, \label{p21} &\\
&q(0) = q_0 = \id, \quad q(t_1) = q_1, \label{p22} &\\
&t_1 \to \min. \label{p23}&
\end{align}
If $U = -U$, we obtain a sub-Finsler problem, and if $U$ is an ellipsoid centered at the origin, we obtain a sub-Riemannian problem. 

In the case $k = 2$, $G$ is the Heisenberg group, and solution to problem \eqref{p21}--\eqref{p23} was obtained by H. Busemann~\cite{buseman} and V. Berestovskii~\cite{ber2}. 

The sub-Riemannian case $U = \{\sum_{i=1}^k u_i^2 \leq 1\}$ was first considered by R.Brockett~\cite{brockett}, and was completely solved for $k=3$ by O.Myasnichenko~\cite{myasnich}. Some partial results for $k=4$ were obtained by L. Rizzi and U. Serres~\cite{rizzi-serres}.

We consider in greater detail the case $k = 3$, although some results concern the general case $k \ge 2$.

Existence of optimal solutions in problem \eqref{p21}--\eqref{p23} follows in a standard way  from the Rashevsky-Chow and Filippov theorems \cite{notes}.

\section{Linear Casimirs and symplectic foliation}
Before our study of extremals for the problem \eqref{p21}--\eqref{p23}, we consider Casimirs and symplectic foliation (decomposition into coadjoint orbits) on the dual $L^*$ of the Lie algebra $L$ \cite{kirillov}. This is important for our study of extremals for the problem.

Introduce linear on fibers of  the cotangent bundle $T^*G$ Hamiltonians corresponding to the basic left-invariant vector fields on $G$:
$$ h_i (\lambda) = \langle \lambda, X_i \rangle, \quad h_{ij}(\lambda) = \langle \lambda, X_{ij} \rangle, \quad \lambda \in T^*G. $$
Product rule for Lie bracket \eqref{tab1} implies the following multiplication table for Poisson bracket:
\begin{gather}
\{ h_i, h_j \} = h_{ij}, \quad \{ h_{ij}, h_l \} = \{ h_{ij}, h_{lm} \} = 0. \label{tab2}
\end{gather}
The Hamiltonians $h_i$, $h_{ij}$ can be considered as coordinates on the dual $L^*$ of the Lie algebra $L$.

Notice that the Poisson bivector (i.e., the matrix of pairwise Poisson brackets of the basis Hamiltonians $h_i$, $h_{ij}$) is determined by the skew-sym\-met\-ric matrix
\begin{gather}
M = (h_{ij}) = 
\begin{pmatrix}
0& h_{12}& \ldots &h_{1k}\\
-h_{12}& 0 & \ldots & h_{2k}\\
\vdots& \vdots& \vdots& \vdots\\
-h_{1k}& -h_{2k}& \ldots& 0
\end{pmatrix}
\in \so (k). \label{M}
\end{gather}

For a vector $a = (a_1, \dots, a_k) \in \mathbb{R}^k$, consider a linear function
$$ I_a (h) = \langle a, h \rangle = \sum^k_{i=1} a_i h_i, \quad h = (h_1, \dots, h_k) \in \mathbb{R}^k. $$
The next lemma gives conditions for a linear function $I_a$ to be a Casimir on~$L^*$.

\begin{lemma}\label{lem:Ia}
Let $M = (M_{ij}) \in \so(k)$, denote an affine subspace $$S_M = \{ h_{ij} = M_{ij} ~|~ 1 \le i < j \le k \} \subset L^*.$$ 
Then 
$$ \forall i = 1, \dots, k \quad \{ I_a, h_i \}|_{S_M} = 0 \Leftrightarrow a \in \ker M.$$
\end{lemma}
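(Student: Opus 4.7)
The plan is a direct computation of the bracket $\{I_a,h_i\}$ using the multiplication table \eqref{tab2}, followed by recognizing the result as a matrix-vector product.

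First, by bilinearity of the Poisson bracket,
\[
\{I_a, h_i\} \;=\; \sum_{j=1}^{k} a_j \{h_j, h_i\}.
\]
To evaluate the individual brackets $\{h_j,h_i\}$, I would adopt the standard antisymmetric extension $h_{ji} := -h_{ij}$ for $i<j$ and $h_{ii}:=0$, which is already built into the matrix $M \in \so(k)$ from \eqref{M}. Then the table \eqref{tab2} gives $\{h_j,h_i\} = h_{ji}$ uniformly for all $i,j$: the case $j<i$ is \eqref{tab2} directly, the case $j>i$ follows from antisymmetry of the bracket together with the convention, and $j=i$ is trivial. This avoids splitting into cases by hand.

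Next I would restrict to $S_M$, replacing each $h_{ji}$ by the corresponding matrix entry $M_{ji}$. This yields
\[
\{I_a, h_i\}\big|_{S_M} \;=\; \sum_{j=1}^{k} a_j M_{ji} \;=\; (M^{\top} a)_i \;=\; -(Ma)_i,
\]
the last equality because $M$ is skew-symmetric.

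Finally, requiring $\{I_a,h_i\}|_{S_M}=0$ for every $i=1,\dots,k$ is the same as $Ma=0$, i.e.\ $a\in\ker M$, which gives both implications of the equivalence. The only delicate point is bookkeeping of signs between the antisymmetric matrix $M$ and the Poisson bracket, but once the uniform identity $\{h_j,h_i\}=h_{ji}$ is adopted this is immediate and no further obstacle remains.
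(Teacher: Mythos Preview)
Your proof is correct and follows essentially the same approach as the paper: expand $\{I_a,h_i\}$ by bilinearity, use $\{h_j,h_i\}=h_{ji}$, and recognize the result on $S_M$ as $-(Ma)_i$. The paper compresses this into a single displayed line, but the content is identical.
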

\begin{proof}
$\{ I_a, h_i \} = \{ \sum^k_{j=1} a_j h_j, h_i \} = \sum^k_{j=1} a_j \{ h_j, h_i \} = \sum^k_{j=1} a_j h_{ji} = -Ma$.
\end{proof}
By virtue of \eqref{tab2}, $h_{ij}$ are Casimirs on $L^*$. By Lemma \ref{lem:Ia}, on each $k$-dimensional subspace $\{ h_{ij} = \const \} \subset L^*$ there are $N$ linear in $h_i$ Casimirs, where $N = \dim \ker M$, and $M$ is given by \eqref{M}. This observation yields the whole symplectic foliation on $L^*$ in the case $k = 3$. Notice that in the Heisenberg case $k = 2$, the symplectic foliation consists of 2-dimensional leaves $\{ h_{12} = \const \ne 0 \}$ and 0-dimensional leaves $\{ h_{12} = 0, (h_1, h_2) = \const \}$.

\begin{theorem}\label{propos:fol}
If  the number $k$ of generators  equals $3$, then the symplectic foliation on $L^*$ consists of the following leaves:
\begin{itemize}
\item $2$-dimensional leaves
$$ h_{ij} = \const, \quad M \ne 0, \quad I_a(h) = \const,$$ 
where  $\ker M = \mathbb{R}a$, 
\item $0$-dimensional leaves
$$ h_{ij} = 0, \quad (h_1, h_2, h_3) = \const. $$
\end{itemize}
\end{theorem}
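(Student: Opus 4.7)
The Casimirs $h_{ij}$ force every symplectic leaf to lie entirely within one level set $S_M = \{h_{12}=M_{12},\,h_{13}=M_{13},\,h_{23}=M_{23}\}$, where $M \in \so(3)$. So the task reduces to determining, for each antisymmetric $3\times 3$ matrix $M$, the symplectic leaves inside the $3$-dimensional affine subspace $S_M$ equipped with the Poisson structure obtained by restriction. On $S_M$, using $(h_1,h_2,h_3)$ as coordinates, the matrix of pairwise Poisson brackets of the generators is exactly the constant matrix $M$, by the multiplication table \eqref{tab2}.

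I would then split into two cases based on the rank of $M$. Since $M$ is a $3\times 3$ real antisymmetric matrix, its rank is either $0$ or $2$. If $M = 0$, every Poisson bracket of coordinate functions vanishes identically on $S_M$, so each point of $S_0$ is its own symplectic leaf, giving the second item of the theorem. If $M \neq 0$, then $\rank M = 2$ and $\dim \ker M = 1$; pick $a \ne 0$ with $\ker M = \mathbb{R} a$. Lemma~\ref{lem:Ia} applied to this $a$ shows that the linear function $I_a$ Poisson-commutes with every $h_i$ on $S_M$, and it Poisson-commutes with all $h_{jl}$ automatically (they are global Casimirs). Hence $I_a$ is a Casimir of the restricted Poisson structure on $S_M$, independent of the three defining functions of $S_M$.

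To conclude, I count dimensions: the Poisson bivector on $S_M$ has constant matrix $M$ of rank $2$, so its symplectic leaves are $2$-dimensional, and the rank of the collection of Casimirs is $3 + 1 = 4$, matching $\dim L^* - 2 = 6 - 2 = 4$. Because the Poisson structure on $S_M$ is constant (linear in nothing, just a fixed matrix), the symplectic leaves are precisely the affine subspaces parallel to the image of $M$, i.e.\ the level sets $\{I_a = \const\}$ inside $S_M$. These are $2$-dimensional affine planes, connected, of the right dimension, and annihilated by exactly the collection of Casimirs identified above, so they are the symplectic leaves, giving the first item of the theorem.

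\textbf{Main obstacle.} The only nontrivial point is the last identification, that the connected $2$-dimensional affine planes $\{h_{ij}=M_{ij},\,I_a(h)=\const\}$ really are the symplectic leaves rather than merely containing them. This is clean because the induced Poisson structure on $S_M$ is a \emph{constant} bivector of rank $2$, so a symplectic leaf through a point is the affine translate of its image; I would state this as a direct verification rather than invoke general theory. Everything else reduces to the antisymmetry of $M$ and to Lemma~\ref{lem:Ia}.
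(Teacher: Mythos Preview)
Your proposal is correct and follows essentially the same approach as the paper: reduce to the affine slices $S_M$ via the Casimirs $h_{ij}$, split on $\rank M \in \{0,2\}$, and in the nonzero case invoke Lemma~\ref{lem:Ia} to obtain the additional linear Casimir $I_a$. Your explicit observation that the restricted Poisson bivector on $S_M$ is \emph{constant}, so that the leaves are affine translates of $\operatorname{im} M$, makes rigorous the step the paper leaves implicit (``it coincides with a level surface of a linear Casimir $I_a$''); otherwise the arguments coincide.
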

\begin{proof}
In the case $k = 3$ equality \eqref{M} 
reduces to
\begin{gather*}
M = 
\begin{pmatrix}
0& h_{12}& h_{13}\\
-h_{12}& 0 & h_{23}\\
-h_{13}& -h_{23}& 0
\end{pmatrix}
\in \so (3).
\end{gather*}
There are two possibilities:
\begin{enumerate}
\item $M \ne 0 \Leftrightarrow \dim \ker M = 1 \Leftrightarrow \rank M = 2$,
\item $M = 0 \Leftrightarrow \dim \ker M = 3 \Leftrightarrow \rank M = 0$.
\end{enumerate}

Now let us describe the symplectic foliation on $L^*$. Recall that dimension of a symplectic leaf is equal to the rank of Poisson bivector. 
The functions $h_{ij}$ are Casimirs, thus each 3-dimensional subspace $\{ h_{ij} = \const \} \subset L^*$ is foliated into symplectic leaves. If $M \ne 0$, then each symplectic leaf is 2-dimensional, and it coincides with a level surface of a linear Casimir $I_a$, $\ker M = \mathbb{R}a$. And if $M = 0$, then symplectic leaves are 0-dimensional --- points $(h_1, h_2, h_3) = \const$. 
\end{proof}

\section{Pontryagin maximum principle}
We apply Pontryagin maximum principle  (PMP) in invariant form \cite{notes} to problem \eqref{p21}--\eqref{p23}. The control-dependent Hamiltonian for this problem is $\sum^k_{i=1} u_i h_i (\lambda)$, $\lambda \in T^*G$. The Hamiltonian system of PMP
reads
\begin{align}
&\dot{h}_i = -\sum^k_{j=1} u_j h_{ij}, \quad i = 1, \dots, k, \label{Ham1}& \\
&\dot{h}_{ij} = 0, \quad 1 \le i < j \le k, \label{Ham2}&\\
&\dot{q} = \sum^k_{i=1} u_i X_i, \label{Ham3}&
\end{align}
and the maximality condition of PMP is 
\begin{gather} \label{max}
\sum^k_{i=1} u_i (t) h_i (\lambda_t) = \underset{v \in U}{\mathrm{max}} \sum^k_{i=1} v_i h_i (\lambda_t) = H(h(\lambda_t)), 
\end{gather}
where
$$ H (h_1, \dots, h_k) := \underset{v \in U}{\mathrm{max}} \sum^k_{i=1} v_i h_i$$
is the support function of the set $U$~\cite{rock}. $H$ is convex, positive homogeneous, and continuous.

Along extremal trajectories we have $H \equiv \const \ge 0.$ The abnormal case 
$$ H \equiv 0 \Leftrightarrow h_1 = \dots = h_k \equiv 0 $$
can be omitted since the distribution  $\Delta = \spann(X_1, \dots, X_k)$ satisfies the condition $\Delta^2 = \Delta + [\Delta, \Delta] = TG$, thus by Goh condition \cite{notes} all locally optimal abnormal trajectories are simultaneously normal.

So we consider the normal case: $H \equiv \const > 0$. In view of homogeneity of the vertical part
\eqref{Ham1}, \eqref{Ham2} of the Hamiltonian system of PMP, we will assume that $H \equiv 1$ along extremal trajectories.

From now on we suppose additionally that the set $U$ is strictly convex. Then the maximized Hamiltonian $H$ is $C^1$-smooth on $\mathbb R^k \setminus \{0\}$, and maximum in \eqref{max} is attained at the control $u = \nabla H = (\partial H/\partial h_1, \dots, \partial H / \partial h_k)$ \cite{rock}. Denote $H_i = \partial H / \partial h_i$, $i = 1, \dots, k$. Then the vertical subsystem of the Hamiltonian system reads as follows:
\begin{gather} \label{Hamax}
\dot{h}_i = -\sum^k_{j=1} h_{ij} H_j, \quad \dot{h}_{ij} = 0, \quad 1 \le i < j \le k. 
\end{gather}
In addition to obvious integrals $h_{ij}$, the system \eqref{Hamax} has also the integral $H$ and the linear integrals
$$ I_a (h), \quad a \in \ker M = \ker (h_{ij}). $$
The last claim follows from Lemma \ref{lem:Ia}.

\section{Extremals in the case $k = 3$}
Let $k = 3$. Then the skew-symmetric matrix $M = (h_{ij})$ has a nonzero kernel, this allows us to characterize solutions to system \eqref{Hamax} as follows.

If $M = 0$, then all solutions to system \eqref{Hamax} are constant.

If $M \ne 0$, then $\dim \ker M = 1$, and we have the following description.
\begin{theorem} \label{th:period}
Let $L$ be a step-2 Carnot algebra with 3 generators.
Let $0 \ne M = (h_{ij})  \in \so(3)$. Suppose that $U$ is strictly convex and compact, and contains the origin in its interior. Let $\ker M = \mathbb{R}a$, $0 \ne a \in \mathbb{R}^3$. Then for any  $h^0 \in H^{-1}(1)$ the solution $h(t)$ to system \eqref{Hamax} with the initial condition $h(0) = h^0$ is unique and $C^2$-smooth. Moreover:
\begin{enumerate}
\item if $\nabla H (h^0) \parallel a$, then $h(t) \equiv h^0$,
\item if $\nabla H (h^0) \nparallel a$, then $h(t)$ is a regular periodic planar curve.
\end{enumerate}
\end{theorem}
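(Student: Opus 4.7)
The plan is to rewrite \eqref{Hamax} as $\dot h = -M\nabla H(h)$ with $M \in \so(3)$ constant along the trajectory, exploit the first integrals guaranteed by Theorem \ref{propos:fol} to trap $h(t)$ on a planar convex curve, and then split into a stationary or periodic case via a convex-geometric dichotomy depending on how the affine plane $\{I_a = I_a(h^0)\}$ sits with respect to the polar body $K := U^\circ = \{H \leq 1\}$.

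First I would verify that $H$ and the linear Casimir $I_a$ are both conserved: skew-symmetry of $M$ gives $\dot H = -(\nabla H)^T M \nabla H = 0$, and $Ma = 0$ gives $\dot I_a = -a^T M \nabla H = 0$. Hence, with $c := I_a(h^0)$, $\Pi := \{I_a = c\}$ and $K$ (compact convex, $0 \in \interior K$), the trajectory stays on $\gamma := \Pi \cap \partial K$. Since $\nabla H(h^0)$ is an outer normal to $\partial K$ at $h^0$, the condition $\nabla H(h^0) \parallel a$ is equivalent to $\Pi$ being a supporting hyperplane of $K$ at $h^0$, while $\nabla H(h^0) \nparallel a$ is equivalent to $\Pi$ strictly cutting $K$. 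This is the key observation that reduces the theorem to two transparent subcases.

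In the supporting case $\gamma$ equals the exposed face $F \subset \Pi$ of $K$ in direction $a$. Euler's identity $H(h) = \langle h, \nabla H(h)\rangle$ combined with $\nabla H(h) \parallel a$ on $F$ pins down $\nabla H \equiv a/c$ throughout $F$, so $\dot h = -M(a/c) = 0$ on $F$; any solution is trapped in $F$, where the velocity vanishes identically, so the unique (and trivially smooth) solution is the constant $h(t) \equiv h^0$. In the cutting case the same argument reversed shows $\nabla H(h) \nparallel a$ at every $h \in \gamma$ (otherwise $h$ would belong to an exposed face of $K$ inside $\Pi$, contradicting the cut), so $\dot h \neq 0$ throughout $\gamma$. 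Since $\partial K$ is a $C^1$ surface and $\Pi$ is transverse to it along $\gamma$, the implicit function theorem makes $\gamma$ a $C^1$ simple closed convex curve. Parametrizing $\gamma$ by arclength reduces the flow to a scalar ODE $\dot s = f(s)$ with $f := |M\nabla H(\gamma(\cdot))|$ continuous and strictly positive on the compact $\gamma$, hence bounded below by a positive constant; this yields a unique strictly monotone solution wrapping $\gamma$ periodically with period $T = \oint ds/f(s)$, which is the regular periodic planar curve of statement~(2).

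The step I expect to be the main obstacle is the $C^2$-regularity of $h(t)$: strict convexity of $U$ yields only $H \in C^1$, so the ODE immediately gives $h \in C^1$, but continuous $\ddot h$ requires $\nabla H$ to be differentiable along the $C^1$-curve $\gamma$, which does not follow from strict convexity of $U$ alone (it would follow from $\partial K$ being $C^2$, i.e.\ $U$ smooth and uniformly convex). I would approach it by differentiating the conservation laws $H(h(t)) = 1$ and $I_a(h(t)) = c$ to express $\ddot h$ in terms of the variation of the support map $\nabla H$ along $\gamma$, and then exploit the strict planar convexity inherited by $\gamma$ from $\partial K$ to extract the required continuity of that variation.
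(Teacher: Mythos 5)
Your proposal follows essentially the same route as the paper: conserve $H$ and the linear Casimir $I_a$, confine the trajectory to the planar section $\Gamma=\{H=1\}\cap\{I_a=I_a(h^0)\}$ of the polar body $U^\circ$, split according to whether the plane $\{I_a=I_a(h^0)\}$ supports or cuts $U^\circ$ (the paper phrases this dichotomy as the convex problem $I_a\to\extr$ over $U^\circ$, with $\nabla H\parallel a$ as the extremality condition), and in the cutting case reduce to a nonvanishing scalar ODE on the closed $C^1$ curve solved by separation of variables. The $C^2$-regularity you single out as the main obstacle is exactly the point the paper passes over by asserting that the reduced right-hand side $f$ lies in $C^1(S^1)$ --- which, as you note, does not follow from strict convexity of $U$ alone --- so your hesitation there identifies a genuine soft spot in the paper's own argument rather than a defect of your approach.
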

\begin{proof}
As we noticed in Sec.~\ref{sec:intro}, we can assume that $L$ is the step-2 free-nilpotent Lie algebra with 3 generators.
Consider the constrained optimization problem 
\begin{gather}\label{p2_cond}
I_a (h) \to \extr, \quad H(h) \leq 1.
\end{gather}
Since the polar set  $U^{\circ} = \{ H \leq 1 \}$ is compact and $0 \in \interior U^{\circ}$, this problem has solutions 
$$ I^{\max}_a = \max I_a |_{U^{\circ}} >  I^{\min}_a = \min I_a |_{U^{\circ}}. $$

The condition $\nabla H (h) \parallel a$ is a 
necessary and sufficient condition for global extremum in the convex optimization problem \eqref{p2_cond}. 

1.  Let $\nabla H (h^0) \parallel a$. 
Then $I_a(h^0) \equiv I_a(h(t)) \equiv I_a^{\max}$ or $I_a^{\min}$.
Thus $\nabla H (h(t)) \parallel a$ and $\dot h(t)  = M \nabla H(h(t))\equiv 0$, whence $h(t) \equiv h^0$. 

2. Let $\nabla H (h^0) \nparallel a$. Then the point $h^0$ is not a solution to problem~\eqref{p2_cond}, thus $I_a (h^0) \in (I^{\min}_a, I^{\max}_a)$.
The curve
$$
\Gamma = \{ H(h) = 1 \} \cap \{ I_a(h) = I_a(h^0) \} \subset \mathbb R_{h_1, h_2, h_3}^3
$$
is compact and planar. Any $h \in \Gamma$ satisfies the inclusion $I_a(h) \in (I_a^{\min}, I_a^{\max})$,  thus it is not a solution to problem~\eqref{p2_cond}, so $\nabla H(h) \nparallel a$. Consequently, $\Gamma$ is a $C^1$-regular curve diffeomorphic to $S^1$. 

Choose coordinates in $\mathbb R_{h_1, h_2, h_3}^3$ such that $a = (0, 0, 1)$, then $I_a(h) = h_3$. Parametrize the curve $\Gamma$ as follows: 
$$
h_1 = f_1(\f), \quad h_2 = f_2(\f), \quad h_3 \equiv \const, \qquad \f \in S^1,
$$
where $f_1, f_2 \in C^1(S^1)$. In this parametrization ODE~\eqref{Hamax} reads as
\be{dotf}
\dot \f = f(\f), \qquad \f \in S^1,
\ee
where $f \in C^1(S^1)$ and $f(\f) \neq 0$ for all $\f \in S^1$. ODE~\eqref{dotf} can uniquely be solved for any initial data by separation of variables, thus it has a unique solution $\f(t) \in C^2(S^1)$ for any Cauchy problem $\f(0) = \f^0$. Thus ODE~\eqref{Hamax} has also a unique solution $h(t) \in C^2$ for the Cauchy problem $h(0) = h^0$.

Further, $h(t) \in \Gamma$ and $\dot h(t) = M \nabla H(h(t)) \neq 0$ for all $t$, thus there exists $T > 0$ such that $h(T) = h^0$. By uniqueness of $h(t)$, it is $T$-periodic.
\end{proof}

\begin{corollary}\label{cor:periodic}
Let $L$ be a step-2 Carnot algebra with 3 generators.
Suppose that $U$ is strictly convex and compact, and contains the origin in its interior.
Then all  normal extremal controls are constant or periodic, and continuous.
\end{corollary}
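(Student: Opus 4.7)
The plan is to derive the corollary directly from Theorem \ref{th:period} and the characterization of the maximizing control recorded in Section 4. Recall that strict convexity of $U$ implies that the maximum in the PMP condition \eqref{max} is attained at the unique point $u(t) = \nabla H(h(\lambda_t))$, and that $H$ is $C^1$-smooth on $\mathbb{R}^3 \setminus \{0\}$. Along any normal extremal, $H \equiv 1$, so $h(t) := (h_1(\lambda_t), h_2(\lambda_t), h_3(\lambda_t))$ stays on the level set $H^{-1}(1)$, which is disjoint from the origin; consequently $\nabla H$ is continuous along the whole curve $h(t)$.

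The argument then splits along the two cases appearing in the symplectic foliation of Theorem \ref{propos:fol}. First, if the constants $M = (h_{ij})$ vanish along the extremal, then the vertical system \eqref{Hamax} reduces to $\dot h_i = 0$, so $h(t) \equiv h^0$, giving the constant control $u(t) = \nabla H(h^0)$. Second, if $M \neq 0$, we invoke Theorem \ref{th:period} with $\ker M = \mathbb{R}a$: either $\nabla H(h^0) \parallel a$, in which case $h(t) \equiv h^0$ is constant, or $\nabla H(h^0) \nparallel a$, in which case $h(t)$ is a $T$-periodic regular planar curve for some $T > 0$.

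To conclude, I would transfer these properties from $h(t)$ to $u(t)$. Since $u(t) = \nabla H(h(t))$ is a continuous function of $h(t)$, continuity of $u$ is immediate from continuity of $h$. Constancy of $h$ gives constancy of $u$, and if $h(t+T) = h(t)$ for all $t$, then $u(t+T) = \nabla H(h(t+T)) = \nabla H(h(t)) = u(t)$, so $u$ is also $T$-periodic. The only case requiring brief care is the abnormal regime $H \equiv 0$, but this was already discarded in Section 4 via the Goh condition, so it need not be reconsidered.

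I do not expect any genuine obstacle; the work has been done in Theorem \ref{th:period}, and the corollary is essentially a translation of its conclusions from the adjoint curve $h(t)$ to the control $u(t)$ via the map $\nabla H$.
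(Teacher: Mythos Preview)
Your proposal is correct and follows exactly the paper's approach: the paper's proof is the single line ``Use $u(t) = \nabla H(h(t))$, and apply Th.~\ref{th:period},'' and you have simply unpacked this by handling the $M=0$ case separately and spelling out why constancy/periodicity/continuity pass from $h$ to $u$ through the continuous map $\nabla H$. There is no genuine difference in method.
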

\begin{proof}
Use $u(t) = \nabla H (h(t))$, and apply  Th.~\ref{th:period}. 
\end{proof}

It is known that in the sub-Riemannian case $U = \{ \sum^3_{i=1} u^2_i \le 1 \}$ the extremal controls are given by constant or periodic trigonometric functions \cite{myasnich}.

In a recent paper \cite{hakavuori}, E. Hakavuori proved that for step 2 sub-Finsler Carnot groups with strictly convex norms, only lines are infinite geodesics. Thus in the case $k = 3$, in view  of Th. \ref{th:period}, we have corollaries:
\begin{enumerate}
\item
constant controls are optimal (trivial),
\item
for a periodic nonconstant control $u(t)$ there exists $t_1>0$ such that $u|_{[0, t_1]}$ is not optimal (nontrivial).
\end{enumerate}

\section{General free-nilpotent Lie groups}
It is interesting to look for a generalization of Cor.~\ref{cor:periodic} in the perspective of arbitrary free-nilpotent Lie groups with $k\geq 2$ generators and $s\geq 1$ steps.

In the Abelian case $s = 1$, $k\geq 2$ we have the problem
\begin{align*}
&\dot x = u, \qquad u \in U \subset \mathbb R^k, \quad x \in \mathbb R^k, \\
&x(0) = 0, \qquad x(t_1) = x_1, \\
&t_1 \to \min,
\end{align*}
which obviously has only constant extremal controls.

In the cases $s = 2$, $k = 2$ and $k = 3$ problem~\eqref{p21}--\eqref{p23} has either constant or periodic extremal controls, see~\cite{ber2} and Cor.~\ref{cor:periodic}. Although, in the next case $s=2$, $k=4$ there are possible nonperiodic extremal controls, see the following example.

\begin{example}
Let $s = 2$, $k = 4$, $U = \{\sum_{i=1}^4 u_i^2 \leq 1\}$. Then $H = \sum_{i=1}^4 h_i^2$, and Hamiltonian system~\eqref{Hamax} reads
$$
\dot h_i = - \sum_{i=1}^4 h_{ij} h_j, \qquad \dot h_{ij}=0.
$$
If
$$
M = (h_{ij}) = 
\left(\begin{array}{cccc}
0 & \alpha & 0 & 0 \\ 
-\alpha & 0 & 0 & 0\\
0 & 0& 0 & \beta \\
0 & 0& -\beta & 0 
\end{array}\right), 
\qquad \alpha/\beta \in \mathbb R \setminus \mathbb Q,
$$
then $u = (h_1, \dots, h_4)$ is not periodic for general initial conditions.
\end{example}

In the case $s= 3$, $k = 2$, $U = \{u_1^2+u_2^2 \leq 1\}$, optimal controls are given by Jacobi's elliptic functions~\cite{dido_exp}, and they are of the following classes:
\begin{itemize}
\item
constant,
\item
periodic,
\item asymptotically constant (with constant limits as $t \to \pm \infty$).
\end{itemize}

It would be interesting to characterize similarly optimal controls in the cases $s=3$, $k \geq 3$ and $s\geq 4$. In these cases, if $U = \{\sum_{i=1}^k u_i^2 \leq 1\}$, the normal Hamiltonian system of Pontryagin maximum principle is not Liouville integrable~\cite{borisov, 2358}. 




\section*{Acknowledgment}

Yu. S. thanks Enrico Le Donne and Lev Lokutsievskiy for fruitful discussions of sub-Finsler geometry.
He is also grateful to Department of Mathematics and Statistics, University of Jyv\"askyl\"a, Finland, for hospitality and excellent conditions for work.

The author thanks anonymous reviewers whose comments improved exposition and allowed to add some references.


\end{document}